\documentclass{amsart}

\pdfoutput=1

\usepackage{amsmath, mathtools, comment, bbold}
\usepackage{amsfonts}
\usepackage{natbib}
\usepackage{amssymb}
\usepackage{amsthm}
\usepackage{tikz-cd}
\usetikzlibrary{decorations.pathmorphing}
\usepackage{tikz}
\usepackage{amsxtra}
\usepackage{wrapfig}
\usepackage{color}
\usepackage{url}
\usepackage[hidelinks]{hyperref}
\usepackage{mathrsfs}
\usepackage{stmaryrd}
\usepackage{setspace}
\usepackage[utf8]{inputenc}
\usepackage[T1]{fontenc}
\usepackage{textcomp}
\usepackage{microtype}
\usepackage{cleveref}

%theorem environments

\newtheorem{thm}{Theorem}
\newtheorem{prop}[thm]{Proposition}

\newtheorem{lem}[thm]{Lemma}
\theoremstyle{definition}
\newtheorem{defn}[thm]{Definition}

\theoremstyle{remark}
\newtheorem{ex}[thm]{Example}
\newtheorem{rmk}[thm]{Remark}

%my commands

\newcommand{\cat}[1]{{\mathbf{#1}}}

\newcommand{\into}{\hookrightarrow}

\newcommand{\per}{{\ensuremath{\cat{per}}}\kern 1pt}

%homological algebra

\DeclareMathOperator{\id}{id}

\let\hom\relax\newcommand{\hom}{\mathrm{Hom}}

\DeclareMathOperator{\map}{Hom}
\DeclareMathOperator{\ind}{ind}

%blackboard bold

\newcommand{\V}{\mathcal{V}}

\newcommand{\C}{\mathcal{C}}
\newcommand{\D}{\mathcal{D}}

\newcommand{\Z}{\mathbb{Z}}

\newcommand{\R}{{\mathrm{\normalfont\mathbb{R}}}}

\newcommand{\rmap}{\R\kern -1.5pt \map}

%small matrices

% specific stuff

\numberwithin{equation}{section}

\newcommand*{\1}{\text{\usefont{U}{bbold}{m}{n}1}}

\title{How to invert well-pointed endofunctors}
\author{Matt Booth}

\keywords{well-pointed endofunctors, localisations, dg quotients, spectra, cospectra, stabilisation}
\subjclass[2020]{18E35, 18D20, 18G35, 55P42}

\address{\noindent{Department of Mathematics,
Imperial College London,
SW7 2AZ, United Kingdom}\newline
Heilbronn Institute for Mathematical Research,
Bristol, BS8 1UG, United Kingdom}
	\email{matt.booth@imperial.ac.uk}

\thanks{
This work was supported by the Additional Funding Programme for Mathematical
Sciences, delivered by EPSRC (EP/V521917/1) and the Heilbronn Institute for Mathematical Research.\\
I would like to thank Sebastian Opper and Julie Symons for helpful discussions.}

\begin{document}
	\maketitle
    \begin{abstract}
        In this short note we observe that Kelly's transfinite construction of free algebras yields a way to invert well-pointed endofunctors. In enriched settings, this recovers constructions of Keller, Seidel, and Chen--Wang. We also relate this procedure to localisation by spectra and to Heller's stabilisation.
    \end{abstract}

%people to ask: Leinster, Kaledin, Briggs?, Lazarev?

\section{Enriched preliminaries}
Throughout we will let $(\mathcal{V},\otimes, \1)$ be a bicomplete closed symmetric monoidal category. We write the internal hom-objects as $\V(x,y)\in \V$ and the homsets as $\hom_\V(x,y)\in \mathbf{Set}$. We will assume that $\hom_\V(\1,-)$ is faithful, so that we can regard the objects of $\mathcal{V}$ as sets with extra structure (we call such monoidal categories \textbf{concrete}). We will moreover assume that $\1$ is compact, so that limits and filtered colimits in $\V$ are created in $\mathbf{Set}$.\footnote{If $\V$ is locally presentable, then $\hom_\V(\1,-)$ has a left adjoint \cite[1.66]{AdamekRosicky}. If $\otimes$ in addition preserves compact objects, then we are in the setup of \cite{kellyref}.} The reader who does not care for generalities can imagine $\V$ to be $\mathbf{Set}$, $\mathbf{Vect}$, or $\mathbf{dgVect}$. If $\C$ is a $\V$-category, we denote the enriched hom-objects by $\C(x,y)\in \V$ and the underlying homsets by $\hom_\C(x,y)\in \mathbf{Set}$. 

If $\D$ is an ordinary category, recall that it has an \textbf{ind-category} $\ind\D$ whose objects are given by diagrams $X:J\to \D$ where $J$ is small and filtered, and morphisms are given by $\hom_{\ind \D}(X,Y)\coloneqq \varprojlim_i\varinjlim_j\hom_\D(X_i,Y_j)$. Note that $\ind\D$ is an accessible category, and is locally finitely presentable provided that $\D$ is cocomplete (e.g.~ \cite[11.1]{isaksen}). There is an embedding $\D \into \ind\D$ sending an object $x$ to the diagram $\ast\xrightarrow{x}\D$. If $\D$ has filtered colimits, this has an adjoint given by $\varinjlim$.

If $\C$ is a $\V$-category, then since limits and filtered colimits in $\V$ are created in $\mathbf{Set}$ then the exact same formulas provide a canonical $\V$-enrichment for $\ind\C$. We denote this enriched ind-category by $\hat \C$, so that the underlying category of $\hat \C$ is $\ind\C$. Again, there is a $\V$-functor $\C \into \hat \C$, which is universal in the sense that any $\V$-functor $F:\C\to\D$ extends to a $\V$-functor $\hat F:\hat\C \to \hat\D$ by requiring it to commute with formal filtered colimits. By construction, $\hat F$ is accessible (by which we mean simply that the underlying functor is accessible).

There is a deep theory of enriched accessible categories and the closely related notion of enriched ind-completions \cite{kellyref, BNQ, LT1, LT2}. When the enriching category $\V$ has nontrivial homotopy theory, one also wants enriched ind-categories that take this homotopy theory into account: when $\V = \mathbf{dgVect}$ such a \textit{homotopy ind-dg-completion} is given in \cite{GLSVdB}. In this note we take a more na\"ive approach.

	\section{Well-pointed endofunctors}
    The arguments in this section are all essentially due to Kelly \cite{kelly}, although we circumvent some of the issues encountered there by passing to ind-categories. Our presentation here was heavily influenced by \cite{nlab}. From now on, $\V$ is a concrete bicomplete closed symmetric monoidal category with compact unit. All categories, functors, etc.~ will be enriched over $\V$. A \textbf{pointed endofunctor} on a category $\C$ is a natural transformation $\theta:\id \to \Omega$ of functors on $\C$. Say that $(\Omega,\theta)$ is \textbf{well-pointed} if $\theta\Omega = \Omega\theta$: for all $X$ we have $\theta_{\Omega X}=\Omega(\theta_X)$ as maps $\Omega X \to \Omega^2 X$. An $\Omega$\textbf{-algebra} is an object $X$ together with a map $\Omega X \to X$ such that the composition $X \xrightarrow{\theta_X} \Omega X \to X$ is the identity. There is an evident category of $\Omega$-algebras $\mathbf{Alg}(\Omega)$, constructed as a slice category.
    \begin{lem}If $\theta$ is well-pointed then an object $X$ admits the structure of an $\Omega$-algebra if and only if $\theta_X$ is invertible; in this case the algebra structure is unique.
			\end{lem}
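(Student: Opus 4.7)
The plan is to show both directions by exploiting naturality of $\theta$ together with the well-pointedness hypothesis, after which uniqueness will be immediate.

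For the ``only if'' direction, suppose $(X,\alpha)$ is an $\Omega$-algebra, so that $\alpha\colon \Omega X\to X$ satisfies $\alpha\circ\theta_X=\id_X$. I want to show $\theta_X$ is invertible with inverse $\alpha$, so it remains to verify $\theta_X\circ\alpha=\id_{\Omega X}$. The key observation is to apply naturality of $\theta\colon \id\to\Omega$ to the morphism $\alpha$: this gives
\[
\theta_X\circ\alpha \;=\; \Omega(\alpha)\circ\theta_{\Omega X}.
\]
Now well-pointedness lets me rewrite $\theta_{\Omega X}=\Omega(\theta_X)$, so the right-hand side becomes $\Omega(\alpha)\circ\Omega(\theta_X)=\Omega(\alpha\circ\theta_X)=\Omega(\id_X)=\id_{\Omega X}$, as desired.

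For the ``if'' direction, simply take $\alpha\coloneqq\theta_X^{-1}$; then $\alpha\circ\theta_X=\id_X$ by construction, so $(X,\alpha)$ is an $\Omega$-algebra. Uniqueness then follows from the previous paragraph: any algebra structure $\alpha$ on $X$ must be a two-sided inverse to $\theta_X$, hence coincides with $\theta_X^{-1}$.

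There isn't really a hard step here; the whole argument is a one-line diagram chase. The only ``clever'' move is to feed $\alpha$ itself into the naturality square for $\theta$, which immediately converts the algebra axiom into the missing half of invertibility once $\theta_{\Omega X}=\Omega(\theta_X)$ is substituted. Everything else is formal.
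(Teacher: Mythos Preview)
Your proof is correct and essentially identical to the paper's: both apply naturality of $\theta$ to the algebra structure map $\alpha$ (the paper's $f$) and then invoke well-pointedness to rewrite $\theta_{\Omega X}$ as $\Omega(\theta_X)$, yielding $\theta_X\circ\alpha=\Omega(\alpha\circ\theta_X)=\id_{\Omega X}$. The paper packages the naturality step as a commutative square while you write it out equationally, but the argument is the same.
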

			\begin{proof}
            This is \cite[Proposition 5.2]{kelly}. If $\theta_X$ is invertible then one takes the algebra structure map $\Omega X \to X$ to be its inverse. Conversely, if $f:\Omega X \to X$ is any morphism then well-pointedness yields a commutative diagram $$\begin{tikzcd}
					\Omega X \ar[r,"f"]\ar[d,"{\Omega \theta_X}"]& X \ar[d,"\theta_X "]\\ \Omega^2 X \ar[r,"\Omega f"]& \Omega X
					\end{tikzcd}$$
				which shows that $\theta_Xf = \Omega(f\theta_X)$. If $f$ is an algebra then this shows that $f$ is both a left and right inverse of $\theta_X$, and thus $\theta_X$ is invertible. It is clear that the algebra structure must be unique.
					\end{proof}
In particular, if $\theta$ is well-pointed then the category $\mathbf{Alg}(\Omega)$ is naturally a full subcategory of $\C$. By extending $(\Omega,\theta)$ to a well-pointed endofunctor $(\hat\Omega, \hat\theta)$ of $\hat\C$, we see that we may define a functor $\hat\Omega^\infty: \hat\C \to \hat\C$ by
$$\hat\Omega^\infty (X)\;\coloneqq\; \varinjlim\left(X \xrightarrow{\hat\theta_X} \hat\Omega X \xrightarrow{\hat\theta_{\Omega X}} \hat\Omega^2 X  \xrightarrow{\hat\theta_{\Omega^2 X}}\cdots\right)$$	where we take the colimit in the ind-category\footnote{If $X:J\to\C$ is a filtered diagram, then the colimit of the associated diagram $J\xrightarrow{X} \mathcal{C}\to\hat\C$ is precisely the ind-object $X$. One can easily prove this using the Yoneda lemma.}.
		
		\begin{thm}If $\C$ is cocomplete, then $\hat\Omega^\infty$ is a reflection of $\hat\C$ into $\mathbf{Alg}(\hat\theta)$.
			\end{thm}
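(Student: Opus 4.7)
The plan is to verify that $\hat\Omega^\infty$ is a reflection by checking two things: (i) each $\hat\Omega^\infty X$ is a $\hat\theta$-algebra, and (ii) the canonical map $\eta_X\colon X \to \hat\Omega^\infty X$ given by the inclusion of the zeroth term of the defining colimit is universal among morphisms from $X$ to an algebra. Well-pointedness of $\theta$ passes to $\hat\theta$ on $\hat\C$ (by naturality, together with the fact that $\hat\Omega$ is defined by commuting with formal filtered colimits), so the previous lemma applies in $\hat\C$ and reduces (i) to showing that $\hat\theta$ is invertible at $\hat\Omega^\infty X$.

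For (i), since $\hat\Omega$ commutes with formal filtered colimits, applying it to the defining colimit yields $\hat\Omega\hat\Omega^\infty X \cong \varinjlim_n \hat\Omega^{n+1} X$. By naturality of $\hat\theta$, the component $\hat\theta_{\hat\Omega^\infty X}$ is computed levelwise from the transition maps $\hat\theta_{\hat\Omega^n X}$; that is, it is the canonical comparison map $\varinjlim_n \hat\Omega^n X \to \varinjlim_n \hat\Omega^{n+1} X$ produced by shifting the indexing. Since the shift $n \mapsto n+1$ is cofinal in $\N$, this comparison is an isomorphism, so by the lemma $\hat\Omega^\infty X$ carries a unique algebra structure.

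For (ii), given $f\colon X \to A$ with $A \in \mathbf{Alg}(\hat\theta)$, I define a compatible family $g_n\colon \hat\Omega^n X \to A$ recursively by $g_0 := f$ and $g_{n+1} := \hat\theta_A^{-1} \circ \hat\Omega(g_n)$; the compatibility $g_{n+1} \circ \hat\theta_{\hat\Omega^n X} = g_n$ follows from one naturality square $\hat\Omega(g_n) \circ \hat\theta_{\hat\Omega^n X} = \hat\theta_A \circ g_n$. This assembles into an extension $g\colon \hat\Omega^\infty X \to A$ of $f$. The subtle point is uniqueness: for any other extension $h$ with components $h_n$, naturality at $h_{n+1}$ gives $\hat\theta_A \circ h_{n+1} = \hat\Omega(h_{n+1}) \circ \hat\theta_{\hat\Omega^{n+1} X}$, and using well-pointedness $\hat\theta_{\hat\Omega^{n+1} X} = \hat\Omega(\hat\theta_{\hat\Omega^n X})$ the right-hand side collapses to $\hat\Omega(h_{n+1} \circ \hat\theta_{\hat\Omega^n X}) = \hat\Omega(h_n)$. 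Invertibility of $\hat\theta_A$ then forces $h_{n+1} = \hat\theta_A^{-1} \circ \hat\Omega(h_n)$, matching our recursion, so induction from $h_0 = f$ yields $h = g$. The main obstacle is really this last step: one has to notice that naturality of $\hat\theta$ combined with its well-pointedness pins down the higher components of any extension from their lower ones, so that only one recursion is possible.
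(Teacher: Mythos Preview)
Your proof is correct and gives a direct, self-contained verification of the reflection property. The paper takes a different route: it cites Kelly's general result on the transfinite free-algebra construction for well-pointed endofunctors, noting that the required hypotheses hold because $\hat\C$ is locally presentable (this is where cocompleteness of $\C$ is used) and $\hat\Omega$ is accessible; only the existence of the algebra structure on $\hat\Omega^\infty X$ is sketched, and the universal property is deferred to Kelly. Your approach instead unwinds everything by hand: part (i) via the cofinality-of-shift argument, and part (ii) via the explicit recursion $g_{n+1} = \hat\theta_A^{-1}\circ\hat\Omega(g_n)$ together with the well-pointedness trick for uniqueness. The paper's approach buys brevity and a link to Kelly's broader transfinite framework (which would be needed if the construction required more than $\omega$ steps); yours buys an elementary argument that makes no appeal to external results and in fact never uses cocompleteness of $\C$ --- only that $\hat\C$ has filtered colimits and that $\hat\Omega$ preserves them, both of which are automatic from the construction of the ind-extension.
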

			\begin{proof}
            This is \cite[Remark 6.3]{kelly}, which applies since $\hat \C$ is locally presentable and in particular well-copowered. The idea is simple: by construction $\hat\Omega$ is accessible, so for any ind-object $X$ we obtain a natural map $\hat\Omega \hat\Omega^\infty (X) \to \hat \Omega^\infty (X)$ that makes $\hat\Omega^\infty (X)$ into an $\hat\Omega$-algebra. It follows that $\hat\Omega^\infty$ is a reflection of $\hat\C$ into $\mathbf{Alg}(\hat \C)$.
				\end{proof}
From now on we assume that $\C$ is cocomplete. The following definition, at least in the enriched setting, is due to Wolff \cite{wolffMonads,wolff}:
         \begin{defn}
		A functor $F:\C \to \D$ \textbf{inverts} a natural transformation $\theta$ between endofunctors of $\C$ if for every $X$ in $\C$, the morphism $F(\theta_X)$ is an isomorphism. The \textbf{localisation of $\C$ along $\theta$} is the initial functor that inverts $\theta$; i.e.\ it is a functor $\gamma: \C \to \C'$ such that if $F:\C \to \D$ inverts $\theta$ then there exists a unique $F':\C' \to \D$ such that $F=F'\gamma$.
			\end{defn}	
Let $\Omega^\infty$ denote the composition $\C \into \hat\C \xrightarrow{\hat\Omega^\infty}\mathbf{Alg}(\hat\Omega)$. We have an isomorphism $\mathbf{Alg}(\hat\Omega)(\Omega^\infty X, \Omega^\infty Y) \cong \varprojlim_n \varinjlim_m {\C}(\Omega^n X, \Omega^m Y)$, since $\mathbf{Alg}(\hat\Omega)$ is a full subcategory of $\hat\C$. On the other hand we also have isomorphisms
%\begin{align*}\mathbf{Alg}(\hat\Omega)(\Omega^\infty X, \Omega^\infty Y)& \cong \hat\C_\V(X, \Omega^\infty Y)\\&\cong \varinjlim_m \C(X, \Omega^m Y) \end{align*}
 $$\mathbf{Alg}(\hat\Omega)(\Omega^\infty X, \Omega^\infty Y) \;\cong\; \hat\C(X, \Omega^\infty Y)\;\cong\; \varinjlim_m \C(X, \Omega^m Y)$$
 which will be of more use to us. Write $L_\Omega (\C) \into \hat\C$ for the essential image of $\Omega^\infty$. 
\begin{thm}
	$\Omega^\infty:\C\to L_\Omega (\C)$ is the localisation of $\C$ at $\theta$.
	\end{thm}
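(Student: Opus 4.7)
The plan is to verify the universal property directly, exploiting the explicit description $L_\Omega(\C)(\Omega^\infty X, \Omega^\infty Y) \cong \varinjlim_m \C(X, \Omega^m Y)$ recorded just above.

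First I would confirm that $\Omega^\infty$ inverts $\theta$. The morphism $\Omega^\infty(\theta_X)$ is the map of ind-telescopes $\varinjlim_n \Omega^n X \to \varinjlim_n \Omega^{n+1} X$ induced levelwise by $\theta$; well-pointedness, in the form $\theta_{\Omega^k X} = \Omega^k \theta_X$, makes each square commute and identifies this map with a shift of a cofinal sequence, hence an isomorphism in $\hat\C$.

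Next I would determine the forced value of any factorisation $F' : L_\Omega(\C) \to \D$ satisfying $F = F'\Omega^\infty$. On objects $F'(\Omega^\infty X) = F(X)$. For a morphism $\alpha \in L_\Omega(\C)(\Omega^\infty X, \Omega^\infty Y)$ represented by $f: X \to \Omega^m Y$, the factorisation
\[\alpha \;=\; \bigl((\Omega^\infty\theta)^m_Y\bigr)^{-1} \circ \Omega^\infty f\]
in $L_\Omega(\C)$ forces $F'(\alpha) = \Phi_{m,Y}^{-1} \circ F(f)$, where $\Phi_{m,Y} : F(Y) \to F(\Omega^m Y)$ is the $m$-fold composite of the morphisms $F(\theta_{\Omega^k Y})$ for $0 \le k < m$, each invertible by hypothesis on $F$. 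This both establishes uniqueness and dictates the definition required for existence.

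Then I would check that this forced formula well-defines a functor. Independence of representative follows because replacing $f$ by $\theta_{\Omega^m Y}\circ f$ (a move in the filtered colimit) multiplies both $F(f)$ and $\Phi_{m,Y}$ on the left by the same $F(\theta_{\Omega^m Y})$, which cancels. Functoriality against the composition $[g]\circ[f] = [\Omega^m g \circ f]$ (for $g : Y \to \Omega^n Z$) reduces after clearing $\Phi$'s to the identity
\[F(\Omega^m g)\circ \Phi_{m,Y} \;=\; F(\theta_{\Omega^{m+n-1} Z})\cdots F(\theta_{\Omega^n Z})\circ F(g),\]
which is iterated naturality of $\theta$ applied to $g$ (with well-pointedness used to rewrite $\Omega^k \theta_Y$ as $\theta_{\Omega^k Y}$ and align indices). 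The equation $F = F'\Omega^\infty$ on morphisms is then the $m = 0$ case of the formula. The main obstacle is the morphism-level bookkeeping: commuting $\Phi_{m,Y}$ past $F(\Omega^m g)$ and arriving at precisely the tail suffix of $\Phi_{m+n,Z}$ is more notational than conceptual, but easy to get wrong on a first pass.
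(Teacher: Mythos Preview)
Your proof is correct and reaches the same conclusion by a closely related but differently organised route. The paper obtains \emph{existence} of the factorisation cheaply: it extends $F$ to $\hat F:\hat\C\to\hat\D$ via the universal property of the ind-completion, then observes that $\hat F\Omega^\infty X$ is a filtered colimit along isomorphisms and hence naturally isomorphic to $FX\in\D\subset\hat\D$. Uniqueness is then argued by decomposing any other factorisation $G$ through the colimit description of hom-objects and matching it against $\hat F$. You reverse the order: you first read off the forced formula $F'(\alpha)=\Phi_{m,Y}^{-1}\circ F(f)$ from the factorisation $\alpha=(\Omega^\infty\theta^m_Y)^{-1}\circ\Omega^\infty f$, which gives uniqueness immediately, and then check existence by hand (well-definedness under change of representative, and functoriality via the naturality identity $\Omega^m g\circ\theta^m_Y=\theta^m_{\Omega^n Z}\circ g$). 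Your approach is more elementary in that it never leaves the target $\D$ and avoids invoking $\hat\D$ or accessibility of $\hat F$; the price is the explicit bookkeeping you flag at the end. The paper's approach trades that bookkeeping for the one-line appeal to the ind-extension, but then has to argue uniqueness separately. Both arguments ultimately rest on the same colimit formula $L_\Omega(\C)(\Omega^\infty X,\Omega^\infty Y)\cong\varinjlim_m\C(X,\Omega^m Y)$.
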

\begin{proof}
	Suppose $F:\C \to \D$ is a functor such that every $F(\theta_X)$ is an isomorphism. Extend $F$ to a functor $\hat F:\hat\C \to \hat \D$ and consider the composition $\hat F \Omega^\infty$. By construction we have $$\hat F\Omega^\infty X \;\cong\; \varinjlim \left(FX \to F\Omega X \to F \Omega^2 X \to\cdots\right)$$but by assumption, every map in this colimit is an isomorphism, and so we see that $\hat F \Omega^\infty \cong F$. In other words, $F$ factors through the essential image of $\Omega^\infty$. We need to check that the factoring map $\hat F$ is unique. So let $G:L_\Omega (\C) \to \D$ be any functor such that $G\Omega^\infty = F$. Pick $X\in L_\Omega (\C)$. Since $L_\Omega (\C)$ is defined to be the essential image of $\Omega^\infty$, there must be $X'\in \C$ such that $X\cong \Omega^\infty X'$, and hence $G(X)=F(X') = \hat F (X)$. Let $$G_{\Omega^\infty X, \Omega^\infty Y}:\;L_\Omega (\C)(\Omega^\infty X, \Omega^\infty Y) \longrightarrow \D(G\Omega^\infty X, G\Omega^\infty Y)$$be the component maps of $G$. Replacing $L_\Omega (\C)(\Omega^\infty X, \Omega^\infty Y)$ by a colimit as above, we see that $G_{\Omega^\infty X, \Omega^\infty Y}$ is an inverse limit of maps of the form $$\phi_m:\;\C(X, \Omega^m Y) \longrightarrow \D(G\Omega^\infty X, G\Omega^\infty Y).$$We have a commutative diagram in $\V$ (cf.~ the proof of \cite[Lemma 1.1]{seidel})
	$$
	\begin{tikzcd}
		\C(X, \Omega^m Y) \ar[rr,"\phi_m"]\ar[dr, "G\Omega^\infty_{X,\Omega^mY}"]&& \D(G\Omega^\infty X, G\Omega^\infty Y)\ar[dl, "\psi"]\\
		& \D(G\Omega^\infty X, G\Omega^\infty\Omega^m Y)
	\end{tikzcd}
	$$where $\psi$ is induced by the canonical morphism $Y \to \Omega^m Y$. Because $\Omega^\infty$ inverts $\theta$, it follows that $\psi$ is an isomorphism. In particular, $G_{\Omega^\infty X, \Omega^\infty Y}$ is the inverse limit of the system of maps $G\Omega^\infty_{X,\Omega^m Y} = \hat F\Omega^\infty_{X,\Omega^m Y}$. Running the same argument for $\hat F$ shows that $G$ must be naturally isomorphic to $\hat F$.
	\end{proof}

	\section{Examples}
Here we let $k$ be a field; all categories will be linear over $k$.

\begin{ex}
    Let $\C$ be a $k$-linear category and $T:F\to\mathrm{id}$ a well-copointed\footnote{i.e.\ $T^\mathrm{op}$ is well-pointed; Seidel uses the term \textbf{ambidextrous} \cite{seidel}.} endofunctor on $\C$. Running our constructions in $\C^\mathrm{op}$ yields a localisation $L_F(\C^\mathrm{op})$ that agrees with Seidel's construction \cite{seidel}. In particular, if $\C$ is a pretriangulated dg category and $F$ is a dg functor, then $L_F(\C^\mathrm{op})$ can be identified as the dg quotient of $\C^\mathrm{op}$ by the pretriangulated subcategory spanned by those objects that are annihilated by some power of $F$ \cite[Lemma 1.3]{seidel}.
\end{ex}

\begin{ex}\label{scatex}
    Let $\C$ be a dg-$k$-category and $\theta:\id\to \Omega$ a well-pointed dg endofunctor on $\C$. Then $L_\Omega(\C)$ is precisely the localisation $\mathcal{S}\C$ constructed by Chen and Wang \cite[\S6]{CWKW}\footnote{The motivating example of \cite{CWKW} is the case when $\C$ is the \textbf{Yoneda dg category} of an algebra $A$ and $\Omega$ is the \textbf{noncommutative differential forms} functor; the localisation $\mathcal{S}\C$ is then a model for the dg singularity category of $A$.}. Hence if $\C $ is pretriangulated then $L_\Omega(\C)$ is a model for the dg quotient $\C/\mathbf{thick}\left(\mathrm{cone(\theta_X): X\in \C}\right)$ by \cite[Theorem 6.4]{CWKW}. Note that $L_\Omega(\C)$ is a strictification of Keller's ind-categorical description of the dg quotient \cite{KelQuot}. Indeed, if $\D$ is a pretriangulated dg subcategory of $\C$ then the dg quotient $\C/\D$ can be described as the subcategory of $\hat \C$ on those ind-objects $X$ right orthogonal to $\D$ and which fit into an exact triangle $c \to X \to Y \to$ with $c\in \C$ and $Y \in \hat\D$, as made clear in  \cite[4.9]{Drinfeld}. This provides a high-level viewpoint on some computations of stable Ext made by the author in \cite[Theorem 6.4.6]{MR4214397}.
\end{ex}

\begin{ex}
    Let $\mathcal{A}$ be a dg-$k$-category and $F:\mathcal{A} \to \mathcal{A}$ a dg endofunctor. Define a new dg category $\mathcal{A}_F$ with the same objects as $\mathcal{A}$, and hom-complexes given by $\mathcal{A}_F(X,Y)\coloneqq \oplus_n\mathcal{A}(F^nX,Y)$. The composition of $F^iX \to Y$ and $F^jY\to Z$ is given by $F^{i+j}X \to F^iY \to Z$. The resulting endofunctor $F$ of $\mathcal{A}_F$ is well-pointed, by the natural transformation with components $\mathrm{id}_{FX}\in\mathcal{A}_F(X,FX)$; this is in fact the universal way to make $F$ well-pointed. Then $L_F(\mathcal{A}_F)$ is Keller's dg orbit category \cite[5.1]{keller}. Note that $L_F(\mathcal{A}_F)$ need not be pretriangulated, even if $\mathcal{A}$ was.
\end{ex}

\section{Spectra}
As above, all categories, functors, etc.\, remain enriched over $\mathcal{V}$. Let $\C$ be a category and $\Omega$ an endofunctor\footnote{When $\V=\mathbf{Ab}$ then this is precisely the notion of \textbf{looped category} from \cite{beligiannis}.}. A \textbf{spectrum} is a sequence $X_0, X_1,X_2,\ldots$ of objects in $\C$ with morphisms $\sigma_n:X_{n} \to \Omega X_{n+1}$. A spectrum is an \textbf{$\Omega$-spectrum} when the morphisms $\sigma_n$ are all isomorphisms\footnote{One sometimes calls the first kind of object a \textbf{prespectrum} and the other simply a \textbf{spectrum}.}. There is an evident category $\mathrm{Sp}_\Omega(\C)$ of spectra together with a full subcategory $\smash{\underline{\mathrm{Sp}}}_{\Omega}(\C)$ of $\Omega$-spectra. Since limits in $\mathcal{V}\text{-}\mathbf{Cat}$ are computed pointwise, there is an equivalence of categories $$\smash{\underline{\mathrm{Sp}}}_{\Omega}(\C) \cong \varprojlim \left(\cdots \xrightarrow{\Omega}\C\xrightarrow{\Omega}\C\xrightarrow{\Omega}\C\right)$$and when $\mathcal{V} = \mathbf{Set}$ then ${\mathrm{Sp}}_\Omega(\C)$ can also be obtained as the analogous 2-limit taken in $\mathbf{Cat}$.\footnote{Presumably a similar statement holds for general $\mathcal{V}$, possibly with some additional assumptions.} Observe that the map $X\mapsto X_n$ which assigns a spectrum its $n^\text{th}$ level can be regarded as a functor ${\mathrm{Sp}}_\Omega(\C) \to \C$.

There is a shift endofunctor $S$ of $\mathrm{Sp}_\Omega(\C)$ given on sequences by $(SX)_i=X_{i+1}$. The $\Omega$ functor extends to an endofunctor of ${\mathrm{Sp}}_\Omega(\C)$, and one can easily check that $\Omega S = S\Omega$. There is a natural transformation $\sigma:\id \to \Omega S$ defined on sequences by $\sigma_{n}: X_n \to \Omega X_{n+1} = \Omega S(X_n)$, making $\Omega S$ into a well-pointed endofunctor.\footnote{The argument showing that $\Omega S$ is well-pointed is precisely the argument which shows that $\sigma$ is a well-defined natural transformation.}

Let $\mathcal{L}$ denote the localisation $L_{\Omega S}({\mathrm{Sp}}_{\Omega }\C)$, which is a a subcategory of the category of ind-spectra $\widehat{\mathrm{Sp}}_\Omega(\C)$. This category comes equipped with a localisation functor $\Omega^\infty S^\infty\coloneqq (\Omega S)^\infty:{\mathrm{Sp}}_{\Omega }(\C) \to{\mathcal{L}}$. Since $\Omega $ and $S$ commute, so do $\hat\Omega$ and $\hat S$, and hence they are mutually inverse functors on $\mathcal{L}$.

Observe that there is a natural fully faithful functor $\iota:\widehat{\mathrm{Sp}}_\Omega(\C) \to \mathrm{Sp}_{\hat\Omega}(\hat\C)$ defined as follows. If $X:J \to {\mathrm{Sp}}_\Omega(\C)$ is an ind-spectrum, then $(\iota X)_n$ is the ind-object $J \xrightarrow{X} {\mathrm{Sp}}_\Omega(\C) \xrightarrow{(-)_n}\C$. The connecting maps are obtained analogously.\footnote{More abstractly, a spectrum is a certain kind of pro-object, and the natural comparison functor $\mathrm{indpro}\C \to \mathrm{proind}\C$ gives the map from ind-spectra to spectra in ind-objects.}

 We refer to the composition $\iota \Omega^\infty S^\infty$ as the \textbf{spectrification} functor; by construction its image lies in the subcategory $\smash{\underline{\mathrm{Sp}}}_{\hat\Omega}(\hat\C)$.
One can easily compute that if $X$ is a spectrum, we have
$(\iota \Omega^\infty S^\infty X)_n \cong \varinjlim\left(X_n \to \Omega X_{n+1} \to \Omega^2X_{n+2}\to\cdots\right)$, where we take the filtered colimit in $\hat\C$. The structure maps are induced from those of $X$.

\begin{ex}
   When $\C$ has filtered colimits, the composition $$\varinjlim \ \circ \  \iota \Omega^\infty S^\infty : \mathrm{Sp}_\Omega(\C) \to \smash{\underline{\mathrm{Sp}}}_{\Omega}(\C)$$ is (an enriched version of) the classical spectrification appearing in e.g.\ \cite{LMMS}.
\end{ex}

\begin{ex}\label{Theta}
    Suppose that the endofunctor $\Omega$ was actually well-pointed, by a natural transformation $\theta$. This yields a functor $\Theta:\C \to {\mathrm{Sp}}_{\Omega }(\C)$ defined by $\Theta(X)_n = X$, with the structure maps $\sigma_n:X \to \Omega X$ given by $\theta$. Then the spectrification of $\Theta (X)$ has at all levels the localisation $\Omega^\infty (X)$. 
\end{ex}

	\begin{ex}
	    Suppose that the endofunctor $\Omega$ admits a left adjoint $\Sigma$. This yields a functor $\Sigma^\infty:\C \to {\mathrm{Sp}}_{\Omega }(\C)$ by putting $\Sigma^\infty(X)_n= \Sigma^nX$. The structure map $\Sigma^nX \to \Omega\Sigma^{n+1}X$ is the adjunct of the identity map on $\Sigma^{n+1}$. Note that by composition with $\Omega^{n}$ this yields maps $\Omega^n\Sigma^nX \to \Omega^{n+1}\Sigma^{n+1}X$. Put $$\Omega^\infty \Sigma^\infty X\coloneqq \varinjlim\left(X \to \Omega \Sigma X \to \Omega^2\Sigma^2 X\to\cdots\right)$$ where again we take the filtered colimit in $\hat \C$. This construction is topologically known as the \textbf{free infinite loop space on} $X$. One can check that the $n^\text{th}$ level of the spectrification of $\Sigma^\infty X$ is precisely
        $\Omega^\infty \Sigma^{\infty}(\Sigma^nX)$, which recovers the classical topological fact that $\Omega^\infty \Sigma^\infty X$ is the zeroth level of the spectrification of $\Sigma^\infty X$.
	\end{ex}

\begin{rmk}
For the purposes of algebraic topology, especially constructing a symmetric monoidal smash product of spectra, the above approach is known to be completely inadequate \cite{lewis}. One either needs to use model categories of highly structured spectra, as in e.g.\, \cite{MMSS}, or use $\infty$-categories from the beginning, as in \cite{HA}. We note that similar constructions to that of this section in a homotopy-invariant setting have already been given in \cite[\S8]{Heller}.
\end{rmk}

\section{Stabilisation, cospectra, and comparisons}

Once again we work in the enriched setting. Let $\C$ be a category and $\Omega$ an endofunctor of $\C$. Following Heller \cite[\S1]{HellerStab}, we define a new category $\mathcal{S}_\Omega\mathcal{C}$, the \textbf{stabilisation} of $\C$, as follows. The objects are the pairs $(c,i)$ with $c\in \C$ and $i\in \Z$. The morphisms are defined to be $$\mathcal{S}_\Omega\mathcal{C}((c,i),(d,j))\coloneqq  \varinjlim_{k}\C(\Omega^{k+i}c,\Omega^{k+j}d)$$with composition inherited from $\C$. For brevity we will write $[-,-]$ for the hom-objects in $\mathcal{S}_\Omega\mathcal{C}$; with this notation we clearly have $[(c,i),(d,j)]\simeq [(c,i+l),(d,j+l)]$ for all $l\in \Z$. The functor $\Omega$ extends to the stabilisation by putting $\Omega(c,i) \coloneqq (\Omega c , i)$, and one can easily verify via the Yoneda lemma that there is a natural isomorphism $\Omega(c,i) \cong (c , i+1)$. In particular, $\Omega$ is an autoequivalence of $\mathcal{S}_\Omega\mathcal{C}$, with inverse $(c,i)\mapsto (c,i-1)$. There is an obvious functor $\C \to \mathcal{S}_\Omega\mathcal{C}$ sending $c$ to $(c,0)$, which is universal with respect to stabilising $\Omega$ \cite[Proposition 1.1]{HellerStab}.

Observe that there is a natural comparison map $\Phi:\smash{\underline{\mathrm{Sp}}}_{\Omega}(\C) \to \mathcal{S}_\Omega \C$ defined by sending a spectrum $X$ to the pair $(X_0,0)\cong (X_i,i)$.

\begin{prop}
    Suppose that $\theta:\id\to \Omega$ is a well-pointed endofunctor on a locally finitely presentable\footnote{One can remove this assumption by replacing $\C$ by $\hat \C$; for readability we refrain from doing this.} category $\C$. We denote by $\Omega^\infty:\C \to \C$ the corresponding localisation functor, with image $L_\Omega\C \into \C$.
    \begin{enumerate}
        \item The localisation $L_\Omega\C$ is a coreflective subcategory of $\mathcal{S}_\Omega\C$, with coreflection given by the functor $\eta$ which sends $(d,i)$ to $(\Omega^\infty (d),0) \cong (\Omega^\infty(d),n)$.
        \item The localisation $L_\Omega\C$ is a coreflective subcategory of $\smash{\underline{\mathrm{Sp}}}_{\Omega}(\C)$, with coreflection given by the functor $\varepsilon$ which sends a spectrum $X$ to the constant spectrum on $\Omega^\infty (X_0)$ (with structure maps as in \Cref{Theta}).
        \item There is a natural comparison map $\Psi: \mathcal{S}_\Omega \C \to \smash{\underline{\mathrm{Sp}}}_{\Omega}(\C)$ which sends $(c,i)$ to the constant spectrum on $\Omega^\infty (c)$ .
        \item There are natural isomorphisms $\Phi\Psi\cong \eta$ and $\Psi\Phi\cong \varepsilon$.
        \item The following are equivalent:
        \begin{itemize}
            \item $\Phi$ is an equivalence, with inverse $\Psi$.
            \item Both $\smash{\underline{\mathrm{Sp}}}_{\Omega}(\C)$ and $\mathcal{S}_\Omega \C$ are naturally equivalent to $L_\Omega\C$.
        \end{itemize}
    \end{enumerate}
\end{prop}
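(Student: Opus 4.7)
The plan is to verify parts (1)--(4) by direct hom-set computations and to derive (5) formally.

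For (1), I would embed $L_\Omega\C$ into $\mathcal{S}_\Omega\C$ by $X \mapsto (X, 0)$ and show that $\eta$ is the right adjoint of this inclusion. The key computation is
\[
[(X,0), (d, i)] = \varinjlim_k \C(\Omega^k X, \Omega^{k+i} d) \cong \varinjlim_m \C(X, \Omega^m d) \cong \hom_{L_\Omega\C}(X, \Omega^\infty d)
\]
for $X \in L_\Omega\C$, where the first isomorphism uses invertibility of $\theta_X$ (so $\Omega^k X \cong X$ coherently, after which one reindexes) and the second is the formula for hom-sets in $L_\Omega\C$ established in Section 2.

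For (2), I would embed $L_\Omega\C$ into $\smash{\underline{\mathrm{Sp}}}_\Omega(\C)$ via the constant-spectrum functor $\Theta$ of Example \ref{Theta}, which lands in $\Omega$-spectra precisely because $\theta_Y$ is invertible on $L_\Omega\C$. Using the limit description $\smash{\underline{\mathrm{Sp}}}_\Omega(\C) \cong \varprojlim(\cdots \xrightarrow{\Omega} \C)$, a morphism $\Theta Y \to X$ is a compatible family $(f_n: Y \to X_n)$ satisfying $\sigma_n f_n = \Omega(f_{n+1}) \theta_Y$. Naturality of $\theta$ rewrites this as $\sigma_n f_n = \theta_{X_{n+1}} f_{n+1}$, and invertibility of the $\sigma_n$ permits repackaging the family as a single element of $\hom_{L_\Omega\C}(Y, \Omega^\infty X_0)$, giving the required adjunction.

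For (3), $\Psi$ is well-defined on morphisms because a morphism $(c, i) \to (d, j)$ in $\mathcal{S}_\Omega\C$ lies in $\varinjlim_k \C(\Omega^{k+i}c, \Omega^{k+j}d)$ and maps canonically to $\hom_{L_\Omega\C}(\Omega^\infty c, \Omega^\infty d)$ via $\Omega^\infty$, inducing a morphism of the associated constant $\Omega$-spectra. For (4), one computes directly: $\Phi\Psi(c,i) = \Phi(\Theta(\Omega^\infty c)) = (\Omega^\infty c, 0) = \eta(c, i)$ and $\Psi\Phi(X) = \Psi(X_0, 0) = \Theta(\Omega^\infty X_0) = \varepsilon(X)$. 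For (5), $\eta$ and $\varepsilon$ are coreflections onto $L_\Omega\C$ inside their respective ambient categories, so each is isomorphic to the identity exactly when the ambient category coincides with $L_\Omega\C$; combining with (4) shows that $\Phi$ is an equivalence with inverse $\Psi$ iff both ambient categories coincide with $L_\Omega\C$.

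The main obstacle will be the hom-set computation in (2): the compatible family $(f_n)$ is not freely parametrized by $f_0$ (rather $f_{n+1}$ determines $f_n$ via the invertible $\sigma_n$, but not conversely), so extracting a single well-defined element of $\hom_{L_\Omega\C}(Y, \Omega^\infty X_0)$ requires careful use of the $\Omega$-spectrum structure together with naturality and well-pointedness of $\theta$.
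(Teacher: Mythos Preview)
Your proposal is correct and follows essentially the same route as the paper. The only notable difference is in part (3): the paper obtains $\Psi$ by invoking the universal property of Heller's stabilisation (since $\smash{\underline{\mathrm{Sp}}}_{\Omega}(\C)$ stabilises $\Omega$, there is a unique induced functor from $\mathcal{S}_\Omega\C$), whereas you construct $\Psi$ by hand on morphisms via $\Omega^\infty$; these amount to the same thing, as Heller's proof of the universal property is exactly this direct construction. Your treatment of (2) is in fact more detailed than the paper's, which simply says the argument is ``similar'' to (1); the obstacle you flag---that the compatible family $(f_n)$ is determined backwards rather than forwards---is a genuine point where care is needed, and is handled by exploiting that $\theta_Y$ is invertible (so that the forward determination also holds) together with the $\Omega$-spectrum structure on $X$.
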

\begin{proof}
    For (1), the inclusion functor is the composition $L_\Omega\C \into \C \to \mathcal{S}_\Omega\C$; this is fully faithful since $\Omega^k\Omega^\infty \cong \Omega^\infty $ as functors on $\C$. For the coreflection, we compute
    $$[(\Omega^\infty c,0),(d,i)] \cong \varinjlim_k \C(\Omega^\infty c, \Omega^{k+i} d)\cong \varinjlim_k \C(\Omega^\infty c, \Omega^{k} d) \cong L_\Omega\C(\Omega^\infty c, \Omega^\infty d)$$where in the last step we use the natural isomorphism $\Omega^\infty\Omega^\infty\cong \Omega^\infty$. The proof of (2) is similar; here the inclusion functor is the composition $L_\Omega\C \into \C \xrightarrow{\Theta} \smash{\underline{\mathrm{Sp}}}_{\Omega}(\C)$ where $\Theta$ is the functor of \Cref{Theta}. For (3), since $\smash{\underline{\mathrm{Sp}}}_{\Omega}(\C)$ stabilises $\Omega$, the universal property of the stabilisation ensures the existence of $\Psi$ and the proof of \cite[Proposition 1.1]{HellerStab} yields the desired description. Claim (4) is a simple computation and claim (5) follows easily.
\end{proof}

\begin{rmk}
    When $\V=\mathbf{Set}$\footnote{More generally, this holds when $\V$ is a presheaf category (e.g.\ $\mathbf{sSet}$), since colimits in $\V$ are computed pointwise. In general, colimits in $\mathcal{V}\text{-}\mathbf{Cat}$ are nontrivial to compute \cite{wolff}.}, one can regard $\mathcal{S}_\Omega \C$ as the colimit of the diagram $\C\xrightarrow{\Omega}\C\xrightarrow{\Omega}\C\xrightarrow{\Omega}\cdots$, which one could call the category of \textbf{$\Omega$-cospectra}\footnote{To obtain the cospectra of \cite{lima}, one should instead take the corresponding 2-colimit. Presumably one can then adapt the arguments of the previous section to construct a cospectrification functor which replaces a cospectrum by an $\Omega$-cospectrum. Note that \cite{AnnalaIwasa} refers to the higher-categorical version of cospectra as \textbf{telescopes}.}. If $J$ denotes the doubly-infinite diagram $\cdots \xrightarrow{\Omega} \C \xrightarrow{\Omega} \C \xrightarrow{\Omega} \C \xrightarrow{\Omega} \cdots$ then we obtain a natural comparison map $ \smash{\underline{\mathrm{Sp}}}_{\Omega}(\C)\cong\varprojlim J \longrightarrow \varinjlim J \cong \mathcal{S}_\Omega \C$ which agrees with the comparison map $\Phi$ defined above. Hence, in this setting, $\Phi$ is an equivalence precisely when $\Omega$ \textbf{has eventual image duality} in the sense of \cite{leinster}. For more on the duality between spectra and cospectra, see \cite[\S4]{Grandis}. 
\end{rmk}

\begin{rmk}Suppose that $\theta:\mathrm{id} \to \Omega$ is a well-pointed endofunctor on $\C$. Although both $\smash{\underline{\mathrm{Sp}}}_{\Omega}(\C)$ and $\mathcal{S}_\Omega\C$ satisfy a universal property with respect to stabilising $\Omega$, neither construction need actually invert the map $\theta$.
\end{rmk}

\begin{rmk}
For certain left triangulated categories $(\C,\Omega)$, the stabilisation $\mathcal{S}_\Omega\C$ can be realised as a generalised singularity category \cite[Theorem 3.8]{beligiannis}, cf.\ \cite{Buchweitz, KellerVossieck, ChenWangNew}. Dually, for certain right triangulated categories, the costabilisation $\smash{\underline{\mathrm{Sp}}}_{\Omega}(\C)$ has a similar interpretation \cite[Theorem 3.11]{beligiannis}, cf.\ \cite{Grandis}.
\end{rmk}

\begin{footnotesize}
	\bibliographystyle{alpha}
	\bibliography{references.bib}
\end{footnotesize}

	\end{document}